\newtheorem{lemma}[equation]{Lemma}
\newtheorem{proposition}[equation]{Proposition}
\newtheorem{corollary}[equation]{Corollary}
\newtheorem{theorem}[equation]{Theorem}
\numberwithin{equation}{section}
\begin{document}

\title[On unit root formulas for toric exponential sums]{On unit root formulas 
for\\  toric exponential sums}
\author{Alan Adolphson}
\address{Department of Mathematics\\ Oklahoma State University\\ Stillwater, OK 
74078}
\email{adolphs@math.okstate.edu}
\author{Steven Sperber}
\address{School of Mathematics\\ University of Minnesota\\ Minneapolis, MN 5545}
\email{sperber@math.umn.edu}
\date{\today}
\keywords{Exponential sum, $A$-hypergeometric function}
\subjclass{Primary: 11T23} 
\begin{abstract}
Starting from a classical generating series for Bessel functions due to 
Schl\"omilch\cite{Sc}, we use Dwork's relative dual theory to broadly generalize unit-root results of Dwork\cite{D2} on Kloosterman sums and Sperber\cite{Sp} on hyperkloosterman sums.  In particular,
we express the (unique) $p$-adic unit root of an arbitrary exponential sum on the torus ${\bf T}^n$ in terms of special values of the $p$-adic analytic continuation of a ratio of $A$-hypergeometric functions.  In contrast with the earlier works, we use noncohomological methods and obtain results that are valid for arbitrary exponential sums without any hypothesis of nondegeneracy.
\end{abstract}
\maketitle

\section{Introduction}

The starting point for this work is the classical generating series
\[ \exp\frac{1}{2}(\Lambda X - \Lambda/X) = \sum_{i\in{\bf Z}} J_i(\Lambda)X^i \]
for the Bessel functions $\{J_i(\Lambda)\}_{i\in{\bf Z}}$ due to Schl\"omilch\cite{Sc} that was the foundation for his treatment of Bessel functions (see \cite[page~14]{W}).  Suitably normalized, it also played a fundamental role in Dwork's construction\cite{D2} of $p$-adic cohomology for~$J_0(\Lambda)$.  Our realization that the series itself (suitably normalized) could be viewed as a distinguished element in Dwork's relative dual complex led us to the present generalization.

Let $A\subseteq{\bf Z}^n$ be a finite subset that spans ${\bf R}^n$ as
real vector space and set
\[ f_\Lambda(X) = \sum_{a\in A} \Lambda_aX^a\in{\bf Z}[\{\Lambda_a\}_{a\in
A}][X_1^{\pm 1},\dots,X_n^{\pm 1}], \]
where the $\Lambda_a$ and the $X_i$ are indeterminates and where $X^a =
X_1^{a_1}\cdots X_n^{a_n}$ for $a = (a_1,\dots,a_n)$. Let ${\bf F}_q$ be the finite field of $q=p^\epsilon$ elements, $p$ a prime, and let $\bar{\bf F}_q$ be its algebraic closure. For each $\bar{\lambda} =
(\bar{\lambda}_a)_{a\in A}\in(\bar{\bf F}_q)^{|A|}$, let
\[ f_{\bar\lambda}(X) = \sum_{a\in A} \bar{\lambda}_a X^a\in{\bf
F}_q(\bar{\lambda})[X_1^{\pm 1},\dots,X_n^{\pm 1}], \]
a regular function on the $n$-torus ${\bf T}^n$ over ${\bf F}_q(\bar{\lambda})$.
Fix a nontrivial additive character $\Theta:{\bf F}_q\to {\bf Q}_p(\zeta_p)$ and
let $\Theta_{\bar{\lambda}}$ be the additive character $\Theta_{\bar{\lambda}}=
\Theta\circ{\rm Tr}_{{\bf F}_q(\bar{\lambda})/{\bf F}_q}$ of the field ${\bf
F}_q(\bar{\lambda})$. For each positive integer $l$, let ${\bf
F}_q(\bar{\lambda},l)$ denote the extension of degree~$l$ of ${\bf
F}_q(\bar{\lambda})$ and define an exponential sum
\[ S_l = S_l(f_{\bar{\lambda}},\Theta_{\bar{\lambda}},{\bf T}^n) = \sum_{x\in {\bf
T}^n({\bf F}_q(\bar{\lambda},l))} \Theta_{\bar{\lambda}}\circ{\rm Tr}_{{\bf
F}_q(\bar{\lambda},l)/{\bf F}_q(\bar{\lambda})}(f_{\bar{\lambda}}(x)). \]
The associated $L$-function is
\[ L(f_{\bar{\lambda}};T) = L(f_{\bar{\lambda}},\Theta_{\bar{\lambda}},{\bf T}^n;T)=
\exp\biggl(\sum_{l=1}^\infty S_l\frac{T^l}{l}\bigg). \] 

It is well-known that $L(f_{\bar{\lambda}};T)\in{\bf Q}(\zeta_p)(T)$ and that its reciprocal zeros and poles are algebraic integers. We note that among these
reciprocal zeros and poles there must be at least one $p$-adic unit: if ${\bf F}_q(\bar{\lambda})$ has cardinality $q^\kappa$, then $S_l$ is the sum
of $(q^{\kappa l}-1)^n$ $p$-th roots of unity, so $S_l$ itself is a $p$-adic
unit for every $l$. On the other hand, a simple consequence of the
Dwork trace formula will imply (see Section~3) that there is at most a
single unit root, and it must occur amongst the reciprocal zeros (as
opposed to the reciprocal poles) of
$L(f_{\bar{\lambda}};T)^{(-1)^{n+1}}$. We denote this unit root by
$u(\bar{\lambda})$. It is the goal of this work to exhibit an explicit
$p$-adic analytic formula for $u(\bar{\lambda})$ in terms of certain
$A$-hypergeometric functions.

Consider the series
\begin{align}
\exp f_\Lambda(X) &= \prod_{a\in A} \exp(\Lambda_aX^a) \\
 & = \sum_{i\in {\bf Z}^n} F_i(\Lambda) X^i \nonumber
\end{align}
where the $F_i(\Lambda)$ lie in ${\bf Q}[[\Lambda]]$.  Explicitly, one has
\begin{equation}
F_i(\Lambda) = \sum_{\substack{u = (u_a)_{a\in A} \\ \sum_{a\in A} u_a a = i}}
\frac{\Lambda^u}{\prod_{a\in A} (u_a!)}.
\end{equation}

The $A$-hypergeometric system with parameter $\alpha =
(\alpha_1,\dots,\alpha_n)\in{\bf C}^n$ (where ${\bf C}$ denotes the complex numbers) is the system of partial differential
equations consisting of the operators
\[ \Box_\ell = \prod_{\ell_a>0}\biggl(\frac{\partial}{\partial
\Lambda_a}\biggr)^{\ell_a} - \prod_{\ell_a<0} \biggl(\frac{\partial}{\partial
\Lambda_a}\biggr)^{-\ell_a} \]
for all $\ell = (\ell_a)_{a\in A}\in{\bf Z}^{|A|}$ satisfying $\sum_{a\in A}\ell_a
a = 0$
and the operators
\[ Z_j = \sum_{a\in A} a_j\Lambda_a\frac{\partial}{\partial \Lambda_a} - \alpha_j
\]
for $a=(a_1,\dots,a_n)\in A$ and $j=1,\dots,n$. Using Equations (1.1) and (1.2), it is straightforward to check that for $i\in{\bf Z}^n$, $F_i(\Lambda)$ satisfies the
$A$-hypergeometric system with parameter $i$.

Fix $\pi$ satisfying $\pi^{p-1} = -p$ and $\Theta(1) \equiv \pi\pmod{\pi^2}$. It
follows from Equation~(1.2) that the $F_i(\pi\Lambda)$ converge $p$-adically for all
$\Lambda$ satisfying $|\Lambda_a|<1$ for all $a\in A$. Let ${\mathcal F}(\Lambda)=F_0(\pi\Lambda)/F_0(\pi\Lambda^p)$.  The main result of this paper is the following statement.  Note that we make no restriction (such as nondegeneracy) on the choice of $\bar{\lambda}\in(\bar{\bf F}_q)^{|A|}$.  

 \begin{theorem}
The series ${\mathcal F}(\Lambda)$ converges $p$-adically for $|\Lambda_a|\leq 1$ for all $a\in A$ and the unit root of $L(f_{\bar{\lambda}};T)$ is given by
\[ u(\bar{\lambda}) = {\mathcal F}({\lambda}){\mathcal F}({\lambda}^p){\mathcal F}(\lambda^{p^2})\cdots
{\mathcal F}({\lambda}^{p^{\epsilon d(\bar{\lambda})-1}}), \]
where ${\lambda}$ denotes the Teichm\"{u}ller lifting of $\bar{\lambda}$ and $d(\bar{\lambda}) = [{\bf F}_q(\bar{\lambda}):{\bf F}_q]$.
\end{theorem}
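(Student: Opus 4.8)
The plan is to follow Dwork's strategy of constructing a $p$-adic cohomology theory via a chain-level formula, but to replace the cohomological machinery with an explicit analysis of the relative dual complex in which the generating series $\sum_i F_i(\pi\Lambda)X^i$ appears as a canonical element. First I would set up the standard Dwork splitting function $\theta(t)=\exp(\pi(t-t^p))$ and form $F(\Lambda,X)=\prod_{a\in A}\theta(\Lambda_aX^a)$; this is an overconvergent function whose reduction mod $p$ gives the additive character, and whose expansion coefficients in $X$ are precisely the $F_i(\pi\Lambda)/(\text{Frobenius twist})$ after one accounts for the $t-t^p$ in the exponent — this is where ${\mathcal F}(\Lambda)=F_0(\pi\Lambda)/F_0(\pi\Lambda^p)$ enters, as the "$i=0$ component" of the splitting function. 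The first real step is the convergence claim: one estimates the $p$-adic size of the coefficients in Equation~(1.2). Using $\mathrm{ord}_p(u_a!)\le u_a/(p-1)$ and $\mathrm{ord}_p(\pi)=1/(p-1)$, the $\Lambda^u$ term in $F_0(\pi\Lambda)$ has valuation $\ge \sum_a u_a(\mathrm{ord}_p\pi - 1/(p-1))\cdot(\text{something})$; a more careful bound shows $F_0(\pi\Lambda)$ has coefficients with valuations growing like $(\sum u_a)\cdot c$ for some $c>0$ once one uses the cancellation $\pi^{p-1}=-p$, giving convergence on the closed polydisc. The ratio then converges there too because $F_0(\pi\Lambda)\equiv 1$ modulo the maximal ideal, so it is a unit.

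The heart of the argument is the relative trace formula. Let $\psi$ denote the Dwork operator $\psi_q = (\text{"divide }X^{pa}\text{ by }X^a\text{"-type contraction})\circ(\text{multiplication by }F(\Lambda,X))$ acting on an appropriate $p$-adic Banach space of functions on ${\bf T}^n$ with coefficients in a ring of analytic functions in $\Lambda$ over the closed polydisc. Dwork's trace formula gives
\[ L(f_{\bar\lambda};T)^{(-1)^{n+1}} = \prod_{i=0}^{n}\det\bigl(I - T\,\alpha\mid \textstyle\bigwedge^{\!i}(\text{reduced complex})\bigr)^{(-1)^{i+1}}, \]
where $\alpha$ is the composite of $\psi$ with the $\epsilon$-fold iterate and with specialization at the Teichmüller point $\lambda$. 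The key structural input — this is the "relative dual" observation advertised in the introduction — is that the generating series spans a rank-one sub-object of the dual (the "coboundaries" direction), on which Frobenius acts by the scalar $\prod_{j=0}^{\epsilon d(\bar\lambda)-1}{\mathcal F}(\lambda^{p^j})$. Concretely, one shows the constant function $1$ (or the element $X^0$) is an eigenvector of $\alpha$ modulo the image of the boundary maps, with eigenvalue the displayed product, by tracking how $\psi$ acts on the $X^0$-coefficient: each application of $\psi$ multiplies by an $F_0$-type factor and Frobenius-twists $\Lambda\mapsto\Lambda^p$, and iterating $\epsilon d(\bar\lambda)$ times (the degree needed to return to ${\bf F}_q(\bar\lambda)$) telescopes into the product $\prod {\mathcal F}(\lambda^{p^j})$.

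The final step is to identify this eigenvalue with the unit root. From the excerpt we already know $L(f_{\bar\lambda};T)^{(-1)^{n+1}}$ has exactly one unit reciprocal zero $u(\bar\lambda)$; so it suffices to show the rank-one piece above contributes a $p$-adic unit while its complement in the trace formula contributes only non-units. The unit estimate on the distinguished eigenvalue is immediate from Step~1, since each ${\mathcal F}(\lambda^{p^j})$ is a unit. The non-unit estimate on the complementary factors is where the work concentrates: one needs a Hodge-type lower bound on $\mathrm{ord}_p$ of the other eigenvalues of $\alpha$, which in Dwork's original setting came from the Newton-above-Hodge polygon inequality but which here, in the absence of any nondegeneracy hypothesis, must instead be extracted directly from the growth estimate of Step~1 applied to the full operator $\psi$ — the point being that $\psi$ is completely continuous and the "$X^i$ with $i\ne 0$" directions each pick up at least one factor of $\pi$ (hence positive valuation) per application because $\theta(t)-1\in\pi t\,{\mathbf Z}_p[[t]]$. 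I expect this non-unit estimate for the complementary spectrum, together with the bookkeeping that shows the alternating product over $\bigwedge^i$ does not accidentally cancel the distinguished unit, to be the main obstacle; everything else is either Dwork's standard formalism or the telescoping computation of the eigenvalue.
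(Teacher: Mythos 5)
Your proposal correctly identifies the general shape of the argument (Dwork splitting function, trace formula, a distinguished rank-one direction on which Frobenius acts by the telescoping product), but there are two genuine gaps, and each is precisely where the paper does something nonobvious.

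First, the convergence of ${\mathcal F}(\Lambda)$ on the closed polydisk cannot be obtained by direct estimates on $F_0(\pi\Lambda)$. The coefficient of $\Lambda^u$ in $F_0(\pi\Lambda)$ is $\pi^{|u|}/\prod_a u_a!$, and ${\rm ord}(\pi^{|u|}/\prod_a u_a!) = \sum_a s_p(u_a)/(p-1)$, which is nonnegative but bounded along whole families (e.g.\ $u_a = p^k$). So $F_0(\pi\Lambda)$ converges on the open polydisk but emphatically \emph{not} on the closed one, and the congruence $F_0(\pi\Lambda)\equiv 1\pmod{\pi}$ only makes it a unit where it is already defined. The convergence of the ratio ${\mathcal F}$ on the closed polydisk is a nontrivial cancellation phenomenon. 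The paper proves it via a fixed-point argument: it defines an operator $\beta$ (normalized $\alpha^*$) on a space $T$ of weight-decaying power series, proves $\beta$ is a contraction (Proposition~2.11), shows its unique fixed point is $\zeta(\Lambda,X)/G_0(\Lambda)$ (Proposition~2.15), and since $\beta$ preserves the subspace $T'$ with coefficients in $R'$ (the closed-polydisk ring), deduces that $G_\mu(\Lambda)/G_0(\Lambda^p)\in R'$ for all $\mu$, hence ${\mathcal F}(\Lambda)=G_0(\Lambda)/G_0(\Lambda^p)\in R'$. Your Step 1 as written would not close.

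Second, your strategy for identifying the eigenvalue and isolating it as the unique unit is to work ``modulo the image of the boundary maps'' and then estimate the complementary spectrum factor by factor in the alternating product. This reintroduces cohomology and the associated Hodge/Newton machinery, which is exactly what the paper goes to some lengths to avoid (the abstract advertises ``noncohomological methods'' and no nondegeneracy hypothesis; without nondegeneracy the Koszul complex is not a resolution and the cohomological Hodge bound is unavailable). The paper's route is different and cleaner. Uniqueness of the unit root is established separately and purely from a matrix estimate on $\alpha_\lambda$ acting on $B$ (Equation~(3.2)): all entries of the matrix of $\alpha_\lambda$ have ${\rm ord}>0$ except the $(0,0)$ entry, so by Serre's formula the Fredholm determinant has at most one unit root, and together with the elementary lower bound from Section~1 this pins down exactly one. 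Then the paper passes to the \emph{dual} Banach space $B^*$, where $\zeta(\Lambda,X)/G_0(\Lambda)|_{\Lambda=\lambda}$ is an honest eigenvector of $\alpha^*_\lambda$ (not an eigenvector modulo anything) with eigenvalue $\prod_{i=0}^{\epsilon d(\bar\lambda)-1}{\mathcal F}(\lambda^{p^i})$. Since $\det(I-T\alpha^*_\lambda)=\det(I-T\alpha_\lambda)$ by Serre duality, this unit eigenvalue must be the one and only unit root. There is no ``bookkeeping that the alternating product does not cancel the distinguished unit'' to do; that worry is exactly what the two separate structural inputs (the matrix estimate giving uniqueness, and the dual-space eigenvector giving the value) are designed to avoid.
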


\section{Analytic continuation}

We begin by proving the analytic continuation of the function ${\mathcal F}$ defined in the introduction.

Let $C\subseteq{\bf R}^n$ be the real cone generated by the elements of $A$ and let $\Delta\subseteq{\bf R}^n$ be the convex hull of the set $A\cup\{(0,\dots,0)\}$.  
Put $M=C\cap{\bf Z}^n$. For $\nu\in M$, define the {\em weight}\/ of $\nu$,
$w(\nu)$, to be the least nonnegative real (hence rational) number such that
$\nu\in w(\nu)\Delta$. There exists $D\in{\bf Z}_{>0}$ such that $w(\nu)\in{\bf
Q}_{\geq 0}\cap{\bf Z}[1/D]$. The weight function $w$ is easily seen to have the
following properties:
\begin{align*}
    \text{(i) }& w(\nu) \geq 0 \text{ and } w(\nu) = 0 \text{ if and only
  if } \nu  = 0, \\
\text{(ii) }& w(c\nu) = cw(\nu) \text{ for } c \in {\bf Z}_{\geq 0}, \\
\text{(iii) }& w(\nu + \mu) \leq w(\nu) + w(\mu) \text{ with equality holding
  if and only if } \nu \text{ and } \mu \text{ are} \\
&\text{ cofacial, that is, } \nu \text{ and } \mu \text{ lie in a cone
  over the same closed face of } \Delta. \\
\text{(iv) }& \text{If $\dim\Delta=n$, let $\{\ell_i\}_{i=1}^N$ be linear forms such that the codimension-one faces}\\
 &\text{of $\Delta$ not containing the origin lie in the hyperplanes $\{\ell_i=1\}_{i=1}^N$.  Then}\\
& w(\nu) = \max\{\ell_i(\nu\}_{i=1}^N.
\end{align*}

Let $\Omega$ be a finite extension of ${\bf Q}_p$ containing $\pi$ and an element $\tilde{\pi}$ satisfying ${\rm ord}\:\tilde{\pi} = (p-1)/p^2$ (we always normalize the valuation so that ${\rm ord}\:p = 1$).  Put 
\[ R = \bigg\{ \xi(\Lambda) = \sum_{\nu\in({\bf Z}_{\geq 0})^{|A|}} c_\nu\Lambda^\nu\mid \text{$c_\nu\in\Omega$ and $\{|c_\nu|\}_\nu$ is bounded}\bigg\}, \]
\[ R' = \bigg\{ \xi(\Lambda) = \sum_{\nu\in({\bf Z}_{\geq 0})^{|A|}} c_\nu\Lambda^\nu\mid \text{$c_\nu\in\Omega$ and $c_\nu\to 0$ as $\nu\to\infty$}\bigg\}. \]
Equivalently, $R$ (resp.\ $R'$) is the ring of formal power series in $\{\Lambda_a\}_{a\in A}$ that converge on the open unit polydisk in $\Omega^{|A|}$ (resp.\ the closed unit polydisk in $\Omega^{|A|}$).  Define a norm on $R$ by setting $|\xi(\Lambda)| = \sup_\nu\{|c_\nu|\}$.  Both $R$ and $R'$ are complete in this norm.
Note that $(1.2)$ implies that the coefficients $F_i(\pi\Lambda)$ of $\exp\pi f_{\Lambda}(X)$ belong to~$R$.

Let $S$ be the set
\begin{multline*}
 S = \\
\bigg\{\xi(\Lambda,X) = \sum_{\mu\in M} \xi_\mu(\Lambda) \tilde{\pi}^{-w(\mu)}X^{-\mu} \mid \text{$\xi_\mu(\Lambda)\in R$ and $\{|\xi_\mu(\Lambda)|\}_\mu$ is bounded}\bigg\}. 
\end{multline*}
Let $S'$ be defined analogously with the conditions ``$\xi_\mu(\Lambda)\in R$'' replaced by ``$\xi_\mu(\Lambda)\in R'$''.  Define a norm on $S$ by setting
\[ |\xi(\Lambda,X)| = \sup_\mu\{|\xi_\mu(\Lambda)|\}. \]
Both $S$ and $S'$ are complete under this norm.

Define $\theta(t) = \exp(\pi(t-t^p)) = \sum_{i=0}^\infty b_it^i$.  One has (Dwork\cite[Section 4a)]{D})
\begin{equation}
{\rm ord}\: b_i\geq \frac{i(p-1)}{p^2}.
\end{equation}
Let
\[ F(\Lambda,X) = \prod_{a\in A}\theta(\Lambda_aX^a) = \sum_{\mu\in M} B_\mu(\Lambda)X^\mu. \]

\begin{lemma}
One has $B_\mu(\Lambda)\in R'$ and $|B_\mu(\Lambda)|\leq |\tilde{\pi}|^{w(\mu)}$.
\end{lemma}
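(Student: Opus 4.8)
The plan is to expand $F(\Lambda,X)=\prod_{a\in A}\theta(\Lambda_aX^a)$ directly using $\theta(t)=\sum_{i\geq 0}b_it^i$ and collect the coefficient of $X^\mu$. Writing $u=(u_a)_{a\in A}\in({\bf Z}_{\geq 0})^{|A|}$, we get
\[ B_\mu(\Lambda)=\sum_{\sum_a u_aa=\mu}\Big(\prod_{a\in A}b_{u_a}\Big)\Lambda^u. \]
So $B_\mu(\Lambda)$ is a power series in $R$ (it is a formal power series with coefficients in $\Omega$ and all exponents $u_a\geq 0$), and the task reduces to two things: (1) a uniform bound $\big|\prod_a b_{u_a}\big|\leq |\tilde\pi|^{w(\mu)}$ for every $u$ with $\sum_a u_aa=\mu$, which gives $|B_\mu(\Lambda)|\leq|\tilde\pi|^{w(\mu)}$; and (2) showing that these coefficients tend to $0$ as $u\to\infty$ within the constraint $\sum_a u_aa=\mu$, which upgrades $B_\mu(\Lambda)\in R$ to $B_\mu(\Lambda)\in R'$.

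For (1), use the estimate (2.1): $\mathrm{ord}\,b_i\geq i(p-1)/p^2$, i.e.\ $|b_i|\leq|\tilde\pi|^i$ since $\mathrm{ord}\,\tilde\pi=(p-1)/p^2$. Hence $\big|\prod_{a\in A}b_{u_a}\big|\leq|\tilde\pi|^{\sum_a u_a}$. Now I need $\sum_{a\in A}u_a\geq w(\mu)$ whenever $\sum_a u_aa=\mu$. This is exactly the defining property of the weight function: $\mu=\sum_a u_aa$ exhibits $\mu$ as lying in $(\sum_a u_a)\Delta$ (each $a\in A\subseteq\Delta$, and $\Delta$ is the convex hull of $A\cup\{0\}$, so a nonnegative-integer combination $\sum u_a a$ with total coefficient $\sum u_a$ lies in $(\sum u_a)\Delta$, absorbing any slack via the origin), so $w(\mu)\leq\sum_a u_a$ by minimality of $w(\mu)$. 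This gives the desired bound termwise, hence $|B_\mu(\Lambda)|\leq|\tilde\pi|^{w(\mu)}$.

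For (2), note that for fixed $\mu$ with $\mu\neq 0$ the set of $u$ with $\sum_a u_aa=\mu$ need not be finite (since $A$ may contain vectors summing to $0$ with nonnegative... actually it cannot, as $0\notin$ the relative interior issue — but $A$ spans ${\bf R}^n$ and lies in a half-space only if $0\notin\mathrm{conv}(A)$; in general there can be relations). If $\sum_a u_a$ is bounded on this set, finiteness is automatic and (2) is trivial. Otherwise, as $u\to\infty$ we must have $\sum_a u_a\to\infty$ (only finitely many $u$ have $\sum_a u_a\leq N$ for each $N$), so $|b_{u_a}|$-type estimate gives $\big|\prod_a b_{u_a}\big|\leq|\tilde\pi|^{\sum_a u_a}\to 0$. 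Either way the coefficients of $B_\mu(\Lambda)$ tend to $0$, so $B_\mu(\Lambda)\in R'$.

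The main obstacle is pinning down the geometric inequality $w(\mu)\leq\sum_{a\in A}u_a$ cleanly — i.e.\ correctly invoking that $\Delta$ is the convex hull of $A\cup\{0\}$ (not just of $A$) so that a representation $\mu=\sum_a u_aa$ can be rescaled into $(\sum_a u_a)\Delta$, using the origin to take up any deficiency when $\sum_a u_a$ exceeds what is strictly needed. Everything else is a routine application of (2.1) together with the nonarchimedean estimate $|\sum(\cdot)|\leq\sup|\cdot|$ for the norm on $R$, and the observation that only finitely many $u$ contribute below any given valuation level.
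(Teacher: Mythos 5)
Your proof is correct and follows essentially the same route as the paper's: expand $B_\mu(\Lambda)$ coefficientwise, apply the bound $\mathrm{ord}\,b_i\geq i(p-1)/p^2$ to get $|\prod_a b_{u_a}|\leq|\tilde\pi|^{\sum_a u_a}$, and then invoke $\sum_a u_a\geq w(\mu)$ whenever $\sum_a u_a a=\mu$. The paper leaves that last geometric inequality implicit in its chain of estimates, whereas you spell out the justification (each $a\in\Delta$ and $0\in\Delta$, so $\mu=\sum_a u_a a\in(\sum_a u_a)\Delta$), which is the right argument and a helpful clarification rather than a different method.
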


\begin{proof}
From the definition,
\[ B_\mu(\Lambda) = \sum_{\nu\in({\bf Z}_{\geq 0})^{|A|}} B^{(\mu)}_\nu\Lambda^\nu, \]
where
\[ B^{(\mu)}_\nu = \begin{cases} \prod_{a\in A}b_{\nu_a}  & \text{if $\sum_{a\in A}\nu_a a = \mu$,} \\ 0 & \text{if $\sum_{a\in A} \nu_a a\neq\mu$.} \end{cases} \]
It follows from (2.1) that $B^{(\mu)}_\nu\to 0$ as $\nu\to\infty$, which shows that $B_\mu(\Lambda)\in R'$.  We have
\[ {\rm ord}\: B^{(\mu)}_\nu\geq\sum_{a\in A}{\rm ord}\: b_{\nu_a}\geq \sum_{a\in A}\frac{\nu_a(p-1)}{p^2}\geq w(\mu)\frac{p-1}{p^2}, \]
which implies $|B_\mu(\Lambda)|\leq |\tilde{\pi}|^{w(\mu)}$.
\end{proof}

By the proof of Lemma 2.2, we may write $B^{(\mu)}_\nu = \tilde{\pi}^{w(\mu)}\tilde{B}^{(\mu)}_\nu$ with $|\tilde{B}^{(\mu)}_\nu|\leq 1$.  We may then write $B_\mu(\Lambda) = \tilde{\pi}^{w(\mu)}\tilde{B}_\mu(\Lambda)$ with $\tilde{B}_\mu(\Lambda) = \sum_\nu \tilde{B}^{(\mu)}_\nu \Lambda^\nu$ and $|\tilde{B}_\mu(\Lambda)|\leq 1$.  Let
\[ \xi(\Lambda,X) = \sum_{\nu\in M} \xi_\nu(\Lambda)\tilde{\pi}^{-w(\nu)}X^{-\nu}\in S. \]
We claim that the product $F(\Lambda,X)\xi(\Lambda^p,X^p)$ is well-defined.  Formally we have
\[ F(\Lambda,X)\xi(\Lambda^p,X^p) = \sum_{\rho\in{\bf Z}^n} \zeta_\rho(\Lambda)X^{-\rho}, \]
where
\begin{equation}
\zeta_\rho(\Lambda) = \sum_{\substack{\mu,\nu\in M \\ \mu-p\nu = -\rho}} \tilde{\pi}^{w(\mu)-w(\nu)}\tilde{B}_\mu(\Lambda)\xi_\nu(\Lambda^p). 
\end{equation}
To prove convergence of this series, we need to show that $w(\mu)-w(\nu)\to\infty$ as $\nu\to\infty$.  By property~(iv) of the weight function, for a given $\nu\in M$ we may choose a linear form $\ell$ (depending on $\nu$) for which $w(\nu) = \ell(\nu)$ while $w(\mu)\geq\ell(\mu)$.  Since $\mu = p\nu-\rho$, we get
\begin{equation}
w(\mu)-w(\nu) \geq \ell(\mu-\nu) = \ell((p-1)\nu)-\ell(\rho) = (p-1)w(\nu)-\ell(\rho).
\end{equation}
As $\nu\to\infty$, $(p-1)w(\nu)\to\infty$ while $\ell(\rho)$ takes values in a finite set of rational numbers (there are only finitely many possibilities for $\ell$).  This gives the desired result.

For a formal series $\sum_{\rho\in{\bf Z}^n} \zeta_\rho(\Lambda)X^{-\rho}$ with $\zeta_\rho(\Lambda)\in\Omega[[\Lambda]]$, define
\[ \gamma'\bigg(\sum_{\rho\in{\bf Z}^n} \zeta_\rho(\Lambda) X^{-\rho}\bigg) = \sum_{\rho\in M}\zeta_\rho(\Lambda) X^{-\rho} \]
and define for $\xi(\Lambda,X)\in S$
\begin{align*}
\alpha^*(\xi(\Lambda,X)) &= \gamma'(F(\Lambda,X)\xi(\Lambda^p,X^p)) \\
 &= \sum_{\rho\in M}\zeta_\rho(\Lambda)X^{-\rho}. 
\end{align*}
For $\rho\in M$ put $\eta_\rho(\Lambda) = \tilde{\pi}^{w(\rho)}\zeta_\rho(\Lambda)$, so that
\begin{equation}
\alpha^*(\xi(\Lambda,X)) = \sum_{\rho\in M} \eta_\rho(\Lambda)\tilde{\pi}^{-w(\rho)} X^{-\rho} 
\end{equation}
with
\begin{equation}
\eta_\rho(\Lambda) = \sum_{\substack{\mu,\nu\in M\\ \mu-p\nu = \rho}} \tilde{\pi}^{w(\rho)+w(\mu)-w(\nu)}\tilde{B}_\mu(\Lambda)\xi_\nu(\Lambda^p). \end{equation}
Since $w(\rho)\geq\ell(\rho)$ for $\rho\in M$, Equation (2.4) implies that 
\begin{equation}
w(\rho)+w(\mu)-w(\nu)\geq (p-1)w(\nu), 
\end{equation}
so by Equation (2.6), $|\eta_\rho(\Lambda)|\leq |\xi(\Lambda,X)|$ for all $\rho\in M$. This shows $\alpha^*(\xi(\Lambda,X))\in S$ and
\[ |\alpha^*(\xi(\Lambda,X))|\leq |\xi(\Lambda,X)|. \]
Furthermore, this argument also shows that $\alpha^*(S')\subseteq S'$.

\begin{lemma}
If $|\xi_0(\Lambda)|\leq |\tilde{\pi}|^{(p-1)/D}$, then $|\alpha^*(\xi(\Lambda,X))|\leq |\tilde{\pi}|^{(p-1)/D}|\xi(\Lambda,X)|$.
\end{lemma}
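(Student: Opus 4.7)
The plan is to revisit the bound in (2.7) and separate the contribution of $\nu=0$ (where the hypothesis on $\xi_0$ must be used) from the contribution of $\nu\ne0$ (where an extra factor of $|\tilde\pi|^{(p-1)/D}$ will come out for free from the weight estimate).

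First I would split the sum (2.6) defining $\eta_\rho(\Lambda)$ as
\[
\eta_\rho(\Lambda) = \sum_{\substack{\mu,\nu\in M\\ \mu-p\nu=\rho\\ \nu\ne 0}} \tilde\pi^{w(\rho)+w(\mu)-w(\nu)}\tilde B_\mu(\Lambda)\xi_\nu(\Lambda^p) \;+\; \delta_{\rho\in M}\,\tilde\pi^{2w(\rho)}\tilde B_\rho(\Lambda)\xi_0(\Lambda^p),
\]
where the second term is the $\nu=0$ contribution (which forces $\mu=\rho$). For the first sum I would invoke property~(i) of the weight function together with the fact that $w(M)\subseteq{\bf Z}[1/D]$, giving $w(\nu)\ge 1/D$ whenever $\nu\ne 0$. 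Substituting into (2.7) yields
\[
w(\rho)+w(\mu)-w(\nu)\ge (p-1)w(\nu)\ge (p-1)/D,
\]
so each term in this part has absolute value at most $|\tilde\pi|^{(p-1)/D}|\xi_\nu(\Lambda^p)|\le|\tilde\pi|^{(p-1)/D}|\xi(\Lambda,X)|$.

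For the $\nu=0$ contribution, I would use the hypothesis $|\xi_0(\Lambda)|\le|\tilde\pi|^{(p-1)/D}$ directly. Since $|\tilde B_\rho(\Lambda)|\le 1$ and the power of $\tilde\pi$ in that term is $2w(\rho)\ge 0$, the term is bounded by $|\tilde\pi|^{(p-1)/D}$. Taking the supremum over $\rho\in M$ and combining the two cases gives $|\eta_\rho(\Lambda)|\le|\tilde\pi|^{(p-1)/D}|\xi(\Lambda,X)|$, which is exactly the claim in view of (2.5).

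I do not expect a serious obstacle here; the lemma is essentially a sharpening of the estimate already carried out just before the statement. The only subtle point is recognizing that the $\nu=0$ summand is the unique summand to which the weight-growth argument $(p-1)w(\nu)\to\infty$ contributes nothing, which is precisely why the hypothesis on $\xi_0$ is needed and sufficient.
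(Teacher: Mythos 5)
Your decomposition of the sum (2.6) into the $\nu=0$ summand and the $\nu\neq 0$ summands, together with the use of $w(\nu)\geq 1/D$ for $\nu\neq 0$ inside the weight estimate (2.7), is precisely the paper's one-line argument unpacked, so the approach matches.

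One caution on the final combination. For the $\nu=0$ summand you only obtain the bound $|\tilde\pi|^{(p-1)/D}$, with no factor of $|\xi(\Lambda,X)|$, so what your argument actually yields is $|\alpha^*(\xi(\Lambda,X))|\leq |\tilde\pi|^{(p-1)/D}\max\{1,|\xi(\Lambda,X)|\}$, which agrees with the stated bound only when $|\xi(\Lambda,X)|\geq 1$. The lemma as literally stated does fail otherwise: taking $\xi(\Lambda,X)$ to be the constant $\tilde\pi^{(p-1)/D}$, one finds $\eta_0(\Lambda)=\tilde\pi^{(p-1)/D}\tilde B_0(\Lambda)$ has norm $|\tilde\pi|^{(p-1)/D}$, whereas the claimed right-hand side is $|\tilde\pi|^{(p-1)/D}|\xi| = |\tilde\pi|^{2(p-1)/D}$, which is strictly smaller. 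You inherit this imprecision from the paper's own terse proof, and it is harmless in context: the lemma is invoked only in the proof of Proposition 2.11, applied to $\xi^{(1)}-\xi^{(2)}$ whose coefficient of $X^0$ vanishes, so the $\nu=0$ summand is absent and your $\nu\neq 0$ estimate alone supplies the contraction constant $|\tilde\pi|^{(p-1)/D}$.
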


\begin{proof}
This follows immediately from Equations (2.6) and (2.7) since $w(\nu)\geq 1/D$ for $\nu\neq 0$.
\end{proof}

From Equation (2.6), we have
\begin{equation}
\eta_0(\Lambda) = \sum_{\nu\in M} \tilde{B}_{p\nu}(\Lambda)\xi_\nu(\Lambda^p)\tilde{\pi}^{(p-1)w(\nu)}.
\end{equation}
Note that $\tilde{B}_0(\Lambda) = B_0(\Lambda)\equiv 1\pmod{\tilde{\pi}}$ since ${\rm ord}\:b_i>0$ for all $i>0$ implies ${\rm ord}\:B^{(0)}_\nu>0$ for all $\nu\neq 0$.  Thus $B_0(\Lambda)$ is an invertible element of $R'$.  The following lemma is then immediate from Equation (2.9).

\begin{lemma}
If $\xi_0(\Lambda)$ is an invertible element of $R$ (resp.\ $R'$), then so is $\eta_0(\Lambda)$.
\end{lemma}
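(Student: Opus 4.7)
My plan is to isolate in Equation (2.9) the $\nu=0$ summand, show that it is manifestly a product of units, and observe that every other summand carries a strictly positive power of $\tilde{\pi}$ and hence contributes only a topologically nilpotent correction that cannot destroy invertibility.

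First I would rewrite (2.9) as
\[
\eta_0(\Lambda) = B_0(\Lambda)\,\xi_0(\Lambda^p) + \sum_{\substack{\nu\in M \\ \nu\neq 0}} \tilde{B}_{p\nu}(\Lambda)\,\xi_\nu(\Lambda^p)\,\tilde{\pi}^{(p-1)w(\nu)},
\]
using $w(0)=0$ and $\tilde{B}_0 = B_0$. The paragraph preceding the lemma already records that $B_0\equiv 1\pmod{\tilde{\pi}}$, so $B_0$ is a unit in $R'$ with inverse supplied by the convergent geometric series $\sum_{k\ge 0}(1-B_0)^k$, and hence is a unit in $R$ as well. Next I would note that the substitution $\Lambda\mapsto \Lambda^p$ is a ring endomorphism of both $R$ and $R'$ (it preserves both boundedness of coefficients and the $c_\nu\to 0$ condition), so it sends units to units; in particular, $\xi_0(\Lambda^p)$ is invertible in whichever of $R$, $R'$ contains the hypothesised inverse of $\xi_0$. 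Thus the $\nu=0$ contribution $B_0(\Lambda)\xi_0(\Lambda^p)$ is already a unit of $R$ (resp.\ $R'$).

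For the remaining sum, Lemma 2.2 gives $|\tilde{B}_{p\nu}(\Lambda)|\le 1$, and combined with $w(\nu)\ge 1/D$ for $\nu\neq 0$ this forces each summand to have $\tilde{\pi}$-order at least $(p-1)^2/(p^2D)>0$, so the total correction is topologically nilpotent. Writing $\eta_0 = B_0(\Lambda)\xi_0(\Lambda^p)\bigl(1+z(\Lambda)\bigr)$ with $z$ topologically nilpotent, the geometric series in $-z$ inverts the factor $1+z$, and multiplying by $(B_0(\Lambda)\xi_0(\Lambda^p))^{-1}$ produces the required inverse of $\eta_0$. The one delicate point is verifying that the quotient $z = (B_0\,\xi_0(\Lambda^p))^{-1}\cdot(\text{correction})$ really has norm $<1$; this is where one combines the pinned norm $|B_0^{-1}|=1$ (from $B_0\equiv 1\pmod{\tilde{\pi}}$) with a norm bound on $\xi_0(\Lambda^p)^{-1}$ supplied by the invertibility hypothesis, against the $\tilde{\pi}^{(p-1)/D}$-smallness of the correction. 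This is the only step that is not purely formal; everything else is a routine consequence of (2.9) and the structural observations above.
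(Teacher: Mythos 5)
Your decomposition of Equation (2.9) --- peel off the $\nu = 0$ summand $B_0(\Lambda)\xi_0(\Lambda^p)$, note it is a unit, and invert the remaining factor $1+z$ by geometric series --- is exactly the argument the paper has in mind when it says the lemma is ``immediate from Equation~(2.9)''. Your subsidiary observations are all sound: $B_0 \equiv 1 \pmod{\tilde\pi}$ forces $B_0^{-1}\in R'$ with $|B_0^{-1}|=1$, and $\Lambda\mapsto\Lambda^p$ is a norm-preserving endomorphism of $R$ and $R'$, so it carries units to units. You are also right, and more careful than the paper, to flag that the whole thing hinges on $|z|<1$.

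The trouble is that the ``delicate point'' cannot be closed from the stated hypotheses, and the resolution you sketch misidentifies what is missing. Gauss-norm multiplicativity does pin $|\xi_0(\Lambda^p)^{-1}| = |\xi_0|^{-1}$, but the estimate you actually obtain from Lemma~2.2 and $w(\nu)\ge 1/D$ is
\[
|z| \;\le\; |\tilde\pi|^{(p-1)/D}\cdot\frac{\displaystyle\sup_{\nu\in M,\,\nu\neq 0}|\xi_\nu(\Lambda)|}{|\xi_0(\Lambda)|},
\]
and invertibility of $\xi_0$ gives no control whatsoever over $\sup_{\nu\neq 0}|\xi_\nu|$. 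For a general $\xi\in S$ this ratio can be made arbitrarily large (in one variable with $A=\{1\}$, take $\xi_0 = 1$, $\xi_1 = C\tilde\pi^{-(p-1)}$ with $|C|$ huge, $\xi_\nu=0$ otherwise; then $\eta_0 = B_0 + C\tilde B_p$ acquires a coefficient of norm $>1$ while its constant term is $1$, so it is not a unit of $R$). What actually closes the gap is the normalization built into the set $T$ where the lemma is used: there $\xi_0 = 1$ and $|\xi_\nu|\le 1$ for every $\nu$, so $|z|\le|\tilde\pi|^{(p-1)/D}<1$ at once. So the lemma should be read with that normalization understood, and your write-up should replace ``a norm bound on $\xi_0(\Lambda^p)^{-1}$ supplied by the invertibility hypothesis'' by the bound $\sup_\nu|\xi_\nu|\le|\xi_0|$ coming from $\xi\in T$. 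The paper's one-word proof elides this as well, so you are in good company, but as written your argument does not close.
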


Put 
\[ T = \{\xi(\Lambda,X)\in S\mid \text{$|\xi(\Lambda,X)|\leq 1$ and $\xi_0(\Lambda) = 1$}\} \]
and put $T' = T\cap S'$.  Using the notation of Equation~(2.5), define $\beta:T\to T$ by
\[ \beta(\xi(\Lambda,X)) = \frac{\alpha^*(\xi(\Lambda,X))}{\eta_0(\Lambda)}. \]
Note that $\beta(T')\subseteq T'$.

\begin{proposition}
The operator $\beta$ is a contraction mapping on the complete metric space $T$.  More precisely, if $\xi^{(1)}(\Lambda,X),\xi^{(2)}(\Lambda,X)\in T$, then
\[ |\beta(\xi^{(1)}(\Lambda,X))-\beta(\xi^{(2)}(\Lambda,X))|\leq |\tilde{\pi}|^{(p-1)/D}|\xi^{(1)}(\Lambda,X)-\xi^{(2)}(\Lambda,X)|. \]
\end{proposition}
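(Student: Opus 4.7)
The plan is to reduce everything to Lemma 2.4 applied to the difference $\xi^{(1)}(\Lambda,X)-\xi^{(2)}(\Lambda,X)$, whose constant coefficient in $X$ is $1-1=0$. Write $\eta^{(i)}_\rho(\Lambda)$ for the coefficients appearing in the expansion (2.5) of $\alpha^*(\xi^{(i)}(\Lambda,X))$.

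First I would verify that $\eta^{(1)}_0(\Lambda)$ and $\eta^{(2)}_0(\Lambda)$ are units of norm $1$ in $R$ (or in $R'$ if we happen to be working in $T'$), so that division by them is harmless. Indeed, specializing the formula (2.9) to $\xi=\xi^{(i)}\in T$ (where $\xi_0=1$), the $\nu=0$ term equals $\tilde{B}_0(\Lambda)$, and every other term carries a factor $\tilde{\pi}^{(p-1)w(\nu)}$ with $w(\nu)\geq 1/D$. Since $\tilde{B}_0(\Lambda)\equiv 1\pmod{\tilde{\pi}}$ by the remark preceding Lemma 2.5, it follows that $\eta^{(i)}_0(\Lambda)\equiv 1\pmod{\tilde{\pi}}$, so $\eta^{(i)}_0$ is a unit of $R$ with $|\eta^{(i)}_0|=|(\eta^{(i)}_0)^{-1}|=1$; Lemma 2.5 gives the analogous statement in $R'$.

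Next I would rewrite
\[
\beta(\xi^{(1)})-\beta(\xi^{(2)})=\frac{\alpha^*(\xi^{(1)})\,\eta^{(2)}_0-\alpha^*(\xi^{(2)})\,\eta^{(1)}_0}{\eta^{(1)}_0\eta^{(2)}_0}
\]
and split the numerator as
\[
\alpha^*(\xi^{(1)}-\xi^{(2)})\,\eta^{(2)}_0 - \alpha^*(\xi^{(2)})\bigl(\eta^{(1)}_0-\eta^{(2)}_0\bigr).
\]
Since $\alpha^*$ is $\Omega$-linear in $\xi$, the difference $\eta^{(1)}_0-\eta^{(2)}_0$ is precisely the constant-in-$X$ coefficient of $\alpha^*(\xi^{(1)}-\xi^{(2)})$, and so is bounded in absolute value by $|\alpha^*(\xi^{(1)}-\xi^{(2)})|$.

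The crux is to bound $|\alpha^*(\xi^{(1)}-\xi^{(2)})|$. The constant coefficient of $\xi^{(1)}-\xi^{(2)}$ is $0$, which trivially satisfies $|0|\leq|\tilde{\pi}|^{(p-1)/D}$, so Lemma 2.4 applies and yields
\[
|\alpha^*(\xi^{(1)}-\xi^{(2)})|\leq |\tilde{\pi}|^{(p-1)/D}\,|\xi^{(1)}-\xi^{(2)}|.
\]
Combining this with $|\eta^{(i)}_0|=1$, $|\alpha^*(\xi^{(2)})|\leq|\xi^{(2)}|\leq 1$, and the ultrametric inequality applied to the two summands of the numerator, one obtains the claimed estimate
\[
|\beta(\xi^{(1)})-\beta(\xi^{(2)})|\leq |\tilde{\pi}|^{(p-1)/D}\,|\xi^{(1)}-\xi^{(2)}|.
\]
The only real obstacle is the unit assertion of step one, which requires the normalization $\xi_0=1$ in the definition of $T$ and the congruence $\tilde{B}_0\equiv 1\pmod{\tilde{\pi}}$; everything else is a straightforward bookkeeping exercise using the linearity of $\alpha^*$ and the ultrametric inequality.
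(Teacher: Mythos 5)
Your proof is correct and follows essentially the same route as the paper: the same decomposition of $\beta(\xi^{(1)})-\beta(\xi^{(2)})$ (you pass through a common denominator and then split, which yields the identical two-term expression the paper writes directly), the same observation that $\eta^{(1)}_0-\eta^{(2)}_0$ is the $X^0$-coefficient of $\alpha^*(\xi^{(1)}-\xi^{(2)})$, and the same appeal to the lemma bounding $|\alpha^*(\xi)|$ when $|\xi_0|\leq|\tilde{\pi}|^{(p-1)/D}$ applied to the difference, whose $X^0$-coefficient is $0$. The only divergence is that you explicitly check $\eta^{(i)}_0\equiv 1\pmod{\tilde\pi}$ to justify $|\eta^{(i)}_0|=|(\eta^{(i)}_0)^{-1}|=1$, a point the paper leaves implicit in its earlier assertion that $\beta$ maps $T$ to $T$; your lemma references are shifted relative to the paper's numbering but correspond correctly by content.
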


\begin{proof}
We have (in the obvious notation)
\begin{equation*}
\begin{split}
\beta(\xi^{(1)}(\Lambda,X))-\beta(\xi^{(2)}(\Lambda,X)) &= \frac{\alpha^*(\xi^{(1)}(\Lambda,X))}{\eta^{(1)}_0(\Lambda)} - 
\frac{\alpha^*(\xi^{(2)}(\Lambda,X))}{\eta^{(2)}_0(\Lambda)} \\
 &= \frac{\alpha^*(\xi^{(1)}(\Lambda,X)-\xi^{(2)}(\Lambda,X))}{\eta^{(1)}_0(\Lambda)} \\
 & \qquad - \alpha^*(\xi^{(2)}(\Lambda,X))\frac{\eta^{(1)}_0(\Lambda) - \eta^{(2)}_0(\Lambda)}{\eta^{(1)}_0(\Lambda)\eta^{(2)}_0(\Lambda)}.
\end{split}
\end{equation*}
Since $\eta^{(1)}_0(\Lambda)-\eta^{(2)}_0(\Lambda)$ is the coefficient of $X^0$ in $\alpha^*(\xi^{(1)}(\Lambda,X)-\xi^{(2)}(\Lambda,X))$, we have
\[ |\eta^{(1)}_0(\Lambda)-\eta^{(2)}_0(\Lambda)|\leq
|\alpha^*(\xi^{(1)}(\Lambda,X)-\xi^{(2)}(\Lambda,X))|. \]
And since the coefficient of $X^0$ in $\xi^{(1)}(\Lambda,X)-\xi^{(2)}(\Lambda,X)$ equals $0$, the proposition follows from Lemma~2.8.
\end{proof}

{\it Remark}:  Proposition 2.11 implies that $\beta$ has a unique fixed point in $T$.  And since $\beta$ is stable on $T'$, that fixed point must lie in $T'$.  Let $\xi(\Lambda,X)\in T'$ be the unique fixed point of $\beta$.  The equation $\beta(\xi(\Lambda,X)) = \xi(\Lambda,X)$ is equivalent to the equation 
\[ \alpha^*(\xi(\Lambda,X)) = \eta_0(\Lambda)\xi(\Lambda,X). \]
Since $\alpha^*$ is stable on $S'$, it follows that
\begin{equation}
\eta_0(\Lambda)\xi_\mu(\Lambda)\in R'\quad\text{for all $\mu\in M$}. 
\end{equation}
In particular, since $\xi_0(\Lambda) = 1$, we have $\eta_0(\Lambda)\in R'$.

Put $C_0=C\cap(-C)$, the largest subspace of ${\bf R}^n$ contained in $C$, and put $M_0 = {\bf Z}^n\cap C_0$, a subgroup of $M$.   
For a formal series $\sum_{\mu\in{\bf Z}^n} c_\mu(\Lambda)X^\mu$ with $c_\mu(\Lambda)\in\Omega[[\Lambda]]$ we define
\[ \gamma\bigg(\sum_{\mu\in{\bf Z}^n} c_\mu(\Lambda)X^\mu\bigg) = \sum_{\mu\in M_0} c_\mu(\Lambda)X^\mu \]
and set
\[ \zeta(\Lambda,X) = \gamma(\exp(\pi f_\Lambda(X))). \]
Of course, when the origin is an interior point of $\Delta$, then $M_0={\bf Z}^n$ and $\zeta(\Lambda,X) = \exp(\pi f_\Lambda(X))$.  In any case, the coefficients of $\zeta(\Lambda,X)$ belong to $R$.

Since $\exp(\pi f_\Lambda(X)) = \prod_{a\in A}\exp(\pi\Lambda_aX^a)$, we can expand this product to get
\begin{align*}
\zeta(\Lambda,X) &= \gamma\bigg(\prod_{a\in A} \sum_{\nu_a=0}^\infty \frac{(\pi\Lambda_aX^a)^{\nu_a}}{\nu_a!}\bigg) \\
 &= \sum_{\mu\in M_0}G_\mu(\Lambda)\tilde{\pi}^{-w(\mu)}X^{-\mu},
\end{align*}
where $G_\mu(\Lambda) = \sum_{\nu\in({\bf Z}_{\geq 0})^{|A|}} G^{(\mu)}_\nu\Lambda^\nu$ with
\[ G^{(\mu)}_\nu = \begin{cases} \tilde{\pi}^{w(\mu)}\prod_{a\in A}\frac{\pi^{\nu_a}}{\nu_a!} & \text{if $\sum_{a\in A}\nu_a a = -\mu$,} \\
0 & \text{if $\sum_{a\in A}\nu_a a\neq -\mu$.}
\end{cases} \]
Since ${\rm ord}\:\pi^i/i!>0$ for all $i>0$, it follows that $G_\mu(\Lambda)\in R$, $|G_\mu(\Lambda)|\leq |\tilde{\pi}|^{w(\mu)}$, and $G_0(\Lambda)$ is invertible in $R$.  This implies that $\zeta(\Lambda,X)/G_0(\Lambda)\in T$.  
Note also that since $F(\Lambda,X) = \exp(\pi f_{\Lambda}(X))/\exp(\pi f_{\Lambda^p}(X^p))$, it is straightforward to check that
\[ \gamma'(F(\Lambda,X)) = \gamma(F(\Lambda,X)) = \gamma\bigg( \frac{\exp \pi f_{\Lambda}(X)}{\exp\pi f_{\Lambda^p}(X^p)}\bigg) = \frac{\zeta(\Lambda,X)}{\zeta(\Lambda^p,X^p)}. \]
It follows that if $\xi(\Lambda,X)$ is a series satisfying $\gamma(\xi(\Lambda,X))\in S$, then
\begin{align}
\alpha^*(\gamma(\xi(\Lambda,X))) &= \gamma'(F(\Lambda,X)\gamma(\xi(\Lambda^p,X^p))) = \gamma(F(\Lambda,X))\gamma(\xi(\Lambda^p,X^p)) \\  &=\frac{\zeta(\Lambda,X)\gamma(\xi(\Lambda^p,X^p))}{\zeta(\Lambda^p,X^p)}. \nonumber
\end{align}

{\it Remark}:  In terms of the $A$-hypergeometric functions $\{F_i(\Lambda)\}_{i\in M}$ defined in Equation~(1.1), we have $\exp(\pi f_\Lambda(X)) = \sum_{i\in M}F_i(\pi\Lambda)X^i$, so for $i\in M_0$ we have the relation
\begin{equation}
F_i(\pi\Lambda) = \tilde{\pi}^{-w(-i)}G_{-i}(\Lambda).
\end{equation}

\begin{proposition}
The unique fixed point of $\beta$ is $\zeta(\Lambda,X)/G_0(\Lambda)$.
\end{proposition}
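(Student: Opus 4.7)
The plan is to verify directly that the candidate $\xi_0(\Lambda,X) := \zeta(\Lambda,X)/G_0(\Lambda)$ is fixed by $\beta$ and then invoke Proposition~2.11 for uniqueness. The paper has already recorded that $\xi_0 \in T$, so the only real work is to show $\beta(\xi_0) = \xi_0$.

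First I would observe that $\zeta(\Lambda,X)$ was constructed so that its monomial support in $X$ is contained in $\{X^{-\mu} : \mu\in M_0\}$, and therefore so is the support of $\xi_0$. In particular $\gamma(\xi_0) = \xi_0$ and $\gamma(\xi_0(\Lambda^p,X^p)) = \xi_0(\Lambda^p,X^p)$. This is the observation that lets Equation~(2.13) be applied directly without any residual contribution from the $\gamma$ operators on the right-hand side.

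Next, substituting $\xi_0$ into (2.13) and using the cancellation $\xi_0(\Lambda^p,X^p) = \zeta(\Lambda^p,X^p)/G_0(\Lambda^p)$ gives
\[ \alpha^*(\xi_0) = \frac{\zeta(\Lambda,X)\,\gamma(\xi_0(\Lambda^p,X^p))}{\zeta(\Lambda^p,X^p)} = \frac{\zeta(\Lambda,X)}{G_0(\Lambda^p)}. \]
Reading off the coefficient of $X^0$ (and using that the $\mu=0$ summand of $\zeta$ contributes $G_0(\Lambda)$) the associated $\eta_0(\Lambda)$ equals $G_0(\Lambda)/G_0(\Lambda^p)$. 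Dividing yields $\beta(\xi_0) = \zeta(\Lambda,X)/G_0(\Lambda) = \xi_0$, as required, and Proposition~2.11 supplies uniqueness.

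I do not expect a genuine obstacle. The argument is essentially a one-line computation once (2.13) is in hand; the only subtle point is the support check that collapses the $\gamma$-projections to the identity, which is built into the definitions of $\zeta(\Lambda,X)$ and $M_0$. Should anything need verification, it is merely that the formal product manipulations in (2.13) are legitimate for this specific $\xi_0$, which is immediate from $\xi_0 \in T$.
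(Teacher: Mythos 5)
Your argument is correct and is essentially identical to the paper's: substitute $\zeta(\Lambda,X)/G_0(\Lambda)$ into Equation~(2.13), observe that the $\gamma$-projections act as the identity because the support lies in $M_0$, read off $\eta_0(\Lambda) = G_0(\Lambda)/G_0(\Lambda^p)$ as the coefficient of $X^0$, and divide, with Proposition~2.11 giving uniqueness. The paper compresses this to a single display (Equation~(2.16)) and the remark that it is equivalent to the proposition, but the underlying computation is the same.
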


\begin{proof}
By Equation~(2.13), we have
\begin{equation}
\alpha^*\bigg(\frac{\zeta(\Lambda,X)}{G_0(\Lambda)}\bigg) = \frac{G_0(\Lambda)}{G_0(\Lambda^p)}\frac{\zeta(\Lambda,X)}{G_0(\Lambda)},
\end{equation}
which is equivalent to the assertion of the proposition.
\end{proof}

By the Remark following Proposition 2.11, $\zeta(\Lambda,X)/G_0(\Lambda)\in T'$.  This gives the following result.
\begin{corollary}
For all $\mu\in M_0$, $G_\mu(\Lambda)/G_0(\Lambda)\in R'$.
\end{corollary}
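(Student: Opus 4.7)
The plan is to regard this corollary as an immediate unpacking of the statement $\zeta(\Lambda,X)/G_0(\Lambda) \in T'$ that appears in the sentence just before it, which is itself a consequence of Proposition~2.15 (identifying the fixed point of $\beta$ as $\zeta/G_0$) together with the Remark after Proposition~2.11 (which shows that the unique fixed point lies in $T' = T \cap S'$).

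First I would recall from the derivation in Section~2 that
\[ \zeta(\Lambda,X) = \sum_{\mu\in M_0} G_\mu(\Lambda)\tilde{\pi}^{-w(\mu)}X^{-\mu}, \]
and that $G_0(\Lambda)$ is invertible in $R$ (both observations are established in that same passage, since $\operatorname{ord} \pi^i/i! > 0$ forces $G_0 \equiv 1 \pmod{\tilde{\pi}}$). Dividing through by $G_0(\Lambda)$ gives
\[ \frac{\zeta(\Lambda,X)}{G_0(\Lambda)} = \sum_{\mu\in M_0} \frac{G_\mu(\Lambda)}{G_0(\Lambda)}\tilde{\pi}^{-w(\mu)}X^{-\mu}, \]
and this is the unique expansion of $\zeta/G_0$ in the family $\{\tilde{\pi}^{-w(\mu)}X^{-\mu}\}_{\mu\in M}$ over $R$, with the coefficients at indices $\mu \in M \setminus M_0$ equal to zero.

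Since $\zeta(\Lambda,X)/G_0(\Lambda) \in T' \subseteq S'$, the very definition of $S'$ forces each coefficient $\xi_\mu(\Lambda)$ in the above expansion to lie in $R'$. For $\mu \in M_0$ that coefficient is $G_\mu(\Lambda)/G_0(\Lambda)$, yielding the claim. There is no real obstacle here: all the substantive work has already been carried out in Proposition~2.11 (contractivity of $\beta$), its Remark (the fixed point lies in $T'$), and Proposition~2.15 (the fixed point is $\zeta/G_0$), so the corollary is a clean notational consequence of reading off coefficients.
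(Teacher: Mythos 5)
Your argument is exactly the paper's: Proposition~2.15 identifies $\zeta(\Lambda,X)/G_0(\Lambda)$ as the unique fixed point of $\beta$, the Remark after Proposition~2.11 places that fixed point in $T'\subseteq S'$, and reading off coefficients against the basis $\{\tilde{\pi}^{-w(\mu)}X^{-\mu}\}$ gives $G_\mu(\Lambda)/G_0(\Lambda)\in R'$. Correct, and essentially identical to the paper's one-line proof.
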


In the notation of the Remark following Proposition 2.11, one has $\xi(\Lambda,X) = \zeta(\Lambda,X)/G_0(\Lambda)$ and $\eta_0(\Lambda) = G_0(\Lambda)/G_0(\Lambda^p)$, so Equation~(2.12) implies the following result. 
\begin{corollary}
For all $\mu\in M_0$, $G_\mu(\Lambda)/G_0(\Lambda^p)\in R'$.
\end{corollary}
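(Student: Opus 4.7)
The plan is to simply unwind the definitions and invoke Equation (2.12) together with Proposition 2.14. The setup has been arranged so that all the work has already been done; this corollary is the natural companion to Corollary 2.16 where one divides by $G_0(\Lambda^p)$ rather than $G_0(\Lambda)$.

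First I would identify the coefficients $\xi_\mu(\Lambda)$ of the fixed point of $\beta$. By Proposition 2.14, the unique fixed point in $T'$ is $\xi(\Lambda,X) = \zeta(\Lambda,X)/G_0(\Lambda)$, and by the explicit expansion
\[ \zeta(\Lambda,X) = \sum_{\mu\in M_0} G_\mu(\Lambda)\tilde{\pi}^{-w(\mu)}X^{-\mu} \]
given before Equation (2.13), the coefficients are $\xi_\mu(\Lambda) = G_\mu(\Lambda)/G_0(\Lambda)$ for $\mu \in M_0$ (and $\xi_\mu = 0$ otherwise). Likewise, as noted in the paragraph preceding Corollary 2.16, the corresponding $\eta_0(\Lambda)$ is $G_0(\Lambda)/G_0(\Lambda^p)$.

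Next I would apply Equation (2.12), which asserts $\eta_0(\Lambda)\xi_\mu(\Lambda) \in R'$ for every $\mu \in M$. Substituting the identifications above yields, for every $\mu \in M_0$,
\[ \eta_0(\Lambda)\xi_\mu(\Lambda) = \frac{G_0(\Lambda)}{G_0(\Lambda^p)} \cdot \frac{G_\mu(\Lambda)}{G_0(\Lambda)} = \frac{G_\mu(\Lambda)}{G_0(\Lambda^p)} \in R', \]
which is precisely the assertion of the corollary.

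Since this is a purely formal consequence of material already established, there is no real obstacle to overcome — the only thing to watch is that $\xi_\mu$ for $\mu \notin M_0$ is zero, so Equation (2.12) contains no further information there, and for $\mu \in M_0$ the $G_0(\Lambda)$ factors cancel cleanly. The substantive content (convergence of the fixed-point series, invertibility of $G_0(\Lambda)$, stability of $\alpha^*$ on $S'$) was already absorbed into Proposition 2.11 and Equation (2.12).
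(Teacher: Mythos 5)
Your proof is correct and matches the paper's own argument exactly: identify $\xi_\mu(\Lambda) = G_\mu(\Lambda)/G_0(\Lambda)$ and $\eta_0(\Lambda) = G_0(\Lambda)/G_0(\Lambda^p)$ from the fixed-point proposition, then apply Equation~(2.12) and cancel the $G_0(\Lambda)$ factors. (Minor note: your references should read Proposition~2.15 and Corollary~2.17 rather than 2.14 and 2.16, but the statements you invoke are the right ones.)
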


In view of Equation~(2.14), Corollary~2.18 implies that the function ${\mathcal F}(\Lambda) = F_0(\pi\Lambda)/F_0(\pi\Lambda^p)$ converges on the closed unit polydisk, which was the first assertion of Theorem~1.3. 

\section{ $p$-adic Theory}

Fix $\bar{\lambda} = (\bar{\lambda}_a)_{a\in A}\in(\bar{\bf F}_q)^{|A|}$ and let $\lambda = (\lambda_a)_{a\in A}\in(\bar{\bf Q}_p)^{|A|}$, where $\lambda_a$ is the Teichm\"uller lifting of $\bar{\lambda}_a$. We recall Dwork's description of $L(f_{\bar{\lambda}};T)$. Let $\Omega_0 = {\bf Q}_p(\lambda,\zeta_p,\tilde{\pi})$ ($ = {\bf Q}_p(\lambda,\pi,\tilde{\pi})$) and let ${\mathcal O}_0$ be the ring of integers of $\Omega_0$.  

We consider certain spaces of functions with support in $M$. 
We will assume that $\Omega_0$ has been extended by a finite totally ramified
extension so that there is an element $\tilde {\pi}_0 $ in $\Omega_0$ satisfying $\tilde{\pi}_0^D = \tilde {\pi}$. We shall write $\tilde {\pi}^{w(\nu)}$ and mean by it $\tilde {\pi}_0^{Dw(\nu)}$ for $\nu \in M$. Using this convention to
simplify notation, we define
\begin{equation}
B=\bigg\{\sum_{\nu \in M}A_\nu\tilde{\pi}^{w(\nu)}X^{\nu} \mid A_\nu \in
\Omega_0,\;A_\nu\rightarrow 0 \text{ as } \nu \rightarrow \infty \bigg\}.
\end{equation}
Then $B$ is an $\Omega_0$-algebra which is complete under the norm 
$$ \bigg|\sum_{\nu \in M}A_\nu\tilde{\pi}^{w(\nu)}X^{\nu}\bigg| = \sup_{\nu \in M}|A_\nu|. $$

We construct a Frobenius map with arithmetic import in the usual way. Let 
$$   F(\lambda,X) = \prod_{a\in A}\theta(\lambda_aX^a)
                     = \sum_{\mu \in M} B_\mu(\lambda)X^{\mu},     $$
i.e., $F(\lambda,X)$ is the specialization of $F(\Lambda,X)$ at $\Lambda = \lambda$, which is permissible by Lemma~2.2.  
Note also that Lemma~2.2 implies
\[ {\rm ord}\:B_\mu(\lambda)\geq \frac{w(\mu)(p-1)}{p^2}, \]
so we may write $B_\mu(\lambda) = \tilde{\pi}^{w(\mu)}\tilde{B}_\mu(\lambda)$ with $\tilde{B}_\mu(\lambda)$ $p$-integral.

Let 
$$     \Psi(X^{\mu}) = \begin{cases} X^{\mu/p} &  \text{if $p|\mu_i$
    for all $i$,} \\ 0 & \text{otherwise}.
             \end{cases} $$
We show that $\Psi \circ F(\lambda,X) $ acts on $B$.  If 
$\xi = \sum_{\nu \in M}A_\nu\tilde{\pi}^{w(\nu)}X^{\nu} \in B$, then   
$$\Psi\bigg(\bigg(\sum_{\nu\in M} \tilde{\pi}^{w(\mu)} \tilde{B}_\mu(\lambda)X^{\mu}\bigg) \bigg(\sum_{\nu\in M}
A_\nu\tilde {\pi}^{w(\nu)} X^{\nu}\bigg)\bigg) = \sum_{\omega\in M} C_\omega(\lambda) \tilde{\pi}^{w(\omega)}X^{\omega}$$  
where
$$ C_\omega(\lambda) = \sum_\nu \tilde {\pi}^{w(p\omega-\nu)+w(\nu) - w(\omega)}\tilde{B}_{p\omega -\nu}(\lambda)A_\nu $$
(a finite sum).  We have
$$      pw(\omega) = w(p\omega) \leq w(p\omega - \nu) + w(\nu) $$ 
so that 
\begin{equation}
{\rm ord}\:C_\omega(\lambda) \geq \inf_\nu \{{\rm ord}\:\tilde{\pi}^{(p-1)w(\omega)}A_\nu\} = \frac{(p-1)^2w(\omega)}{p^2} + \inf_\nu \{{\rm ord}\:A_\nu\}.
\end{equation}
This implies that $\Psi(F(\lambda,X)\xi) \in B$. 

Let $d(\bar{\lambda}) = [{\bf F}_q(\bar{\lambda}):{\bf F}_q]$, so that $\lambda^{p^{\epsilon d(\bar{\lambda})}}=\lambda$.  Put
\[ \alpha_\lambda = \Psi^{\epsilon d(\bar{\lambda})}\circ\bigg(\prod_{i=0}^{\epsilon d(\bar{\lambda}) - 1} F(\lambda^{p^i},X^{p^i})\bigg). \]
For any power series $P(T)$ in the variable $T$ with constant term $1$, define $P(T)^{\delta_{\bar{\lambda}}} = P(T)/P(p^{\epsilon d(\bar{\lambda})}T)$.
Then $\alpha_\lambda$ is a completely continuous operator on $B$ and the Dwork Trace Formula (see Dwork\cite{D}, Serre\cite{S}) gives
\begin{equation}
L(f_{\bar{\lambda}},\Theta_{\bar{\lambda}},{\bf T}^n;T)^{(-1)^{n+1}} = \det(I-T\alpha_\lambda | B)^{\delta_{\bar{\lambda}}^n}.
\end{equation}

By Equation~(3.2), the $(\omega,\nu)$-entry of the matrix of $\alpha_\lambda$ (\cite[Section 2]{S}) has  ${\rm ord}>0$ unless $\omega=\nu=0$.  The formula for $\det(I-T\alpha_\lambda)$ (\cite[Proposition~7a)]{S}) then shows that this Fredholm determinant can have at most a single unit root.  Since $L(f_{\bar{\lambda}};T)$ has at least one unit root (Section~1), Equation~(3.3) proves that $L(f_{\bar{\lambda}};T)$ has exactly one unit root.

\section{Dual theory}

It will be important to consider the trace formula in the dual theory as well. The basis for this construction goes back to \cite{D+} and \cite{S}.  We define 
$$ B^{\ast} = \bigg\{\xi^*=\sum_{\mu \in M}A^{\ast}_\mu \tilde
{\pi}^{-w(\mu)}X^{-\mu}\mid \text{ $\{A^{\ast}_\mu\}_{\mu\in M}$ is a
bounded subset of $\Omega_0$}\bigg\}, $$
a $p$-adic Banach space with the norm 
    $|\xi^{\ast}| = \sup_{\mu \in M} \{|A^{\ast}_\mu|\}$. 
We define a pairing $\langle\;,\;\rangle: B^*\times B\rightarrow{\Omega}_0$: if $\xi = \sum_{\mu \in M} A_\mu \tilde {\pi}^{w(\mu)}X^{\mu}$, $\xi^{\ast}= \sum_{\mu \in M} A^{\ast}_\mu \tilde{\pi}^{-w(\mu)}X^{-\mu} $, set
\[ \langle\xi^*, \xi\rangle = \sum_{\mu \in M}A_\mu A^{\ast}_\mu \in  {\Omega}_0. \]
The series on the right converges since $A_\mu \rightarrow 0$ as $\mu \rightarrow\infty $ and $\{A^{\ast}_\mu\}_{\mu\in M}$ is bounded.  This pairing identifies $B^*$ with the dual space of $B$, i.e., the space of continuous linear mappings from $B$ to $\Omega_0$ (see \cite[Proposition~3]{S}).  

Let $\Phi$ be the endomorphism of the space of formal series defined by 
\[ \Phi\bigg(\sum_{\mu\in{\bf Z}^n}c_\mu X^{-\mu}\bigg) = \sum_{\mu\in{\bf Z}^n} c_\mu X^{-p\mu}, \]
and let $\gamma'$ be the endomorphism
\[ \gamma'\bigg(\sum_{\mu\in{\bf Z}^n}c_\mu X^{-\mu}\bigg) = \sum_{\mu\in M} c_\mu X^{-\mu}. \]
Consider the formal composition $\alpha_\lambda^* = \gamma'\circ\bigg(\prod_{i=0}^{\epsilon d(\bar{\lambda})-1} F(\lambda^{p^i},X^{p^i})\bigg)\circ\Phi^{\epsilon d(\bar{\lambda})}$.  

\begin{proposition}
The operator $\alpha_\lambda^*$ is an endomorphism of $B^*$ which is adjoint to $\alpha_\lambda:B\to B$.
\end{proposition}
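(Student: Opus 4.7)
The plan is to bypass any abstract machinery and just compute. Set $N=\epsilon d(\bar{\lambda})$ and expand the Laurent series
\[ F_{\mathrm{tot}}(X) = \prod_{i=0}^{N-1}F(\lambda^{p^i},X^{p^i}) = \sum_{\rho\in M}\tilde{\pi}^{w(\rho)}\tilde{\mathcal{B}}_\rho(\lambda)X^\rho. \]
The bound $|\tilde{\mathcal{B}}_\rho(\lambda)|\le 1$ follows by iterating Lemma~2.2 (alternatively, it is precisely the estimate already made around (3.2) in order to show that $\alpha_\lambda$ preserves $B$). With this in hand, $\alpha_\lambda^* = \gamma'\circ F_{\mathrm{tot}}\circ\Phi^N$ and $\alpha_\lambda = \Psi^N\circ F_{\mathrm{tot}}$, and both the claim that $\alpha_\lambda^*$ maps $B^*$ to $B^*$ and the adjointness claim reduce to direct manipulation of the resulting series.

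For well-definedness, unfold the definition: for $\xi^*=\sum_\mu A_\mu^*\tilde{\pi}^{-w(\mu)}X^{-\mu}\in B^*$,
\[ \alpha_\lambda^*(\xi^*) = \sum_{\omega\in M}C_\omega^*\,\tilde{\pi}^{-w(\omega)}X^{-\omega},\qquad C_\omega^*=\sum_{\mu\in M}\tilde{\mathcal{B}}_{p^N\mu-\omega}(\lambda)A_\mu^*\tilde{\pi}^{w(p^N\mu-\omega)+w(\omega)-w(\mu)}, \]
with the convention $\tilde{\mathcal{B}}_\rho=0$ if $\rho\notin M$. Subadditivity of $w$ yields $w(p^N\mu-\omega)+w(\omega)\ge p^N w(\mu)$, so the exponent on $\tilde\pi$ is $\ge(p^N-1)w(\mu)\ge 0$. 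Since $w(\mu)\ge 1/D$ for $\mu\ne 0$ and $\{|A_\mu^*|\}$ is bounded, the inner series converges, $|C_\omega^*|\le\sup_\mu|A_\mu^*|$ uniformly in $\omega$, and thus $\alpha_\lambda^*(\xi^*)\in B^*$.

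For adjointness, expand both pairings. Using the coefficient formula (3.2) for $\alpha_\lambda\xi$ with $\xi=\sum_\nu a_\nu\tilde{\pi}^{w(\nu)}X^\nu$ one obtains
\[ \langle\xi^*,\alpha_\lambda\xi\rangle=\sum_{\omega,\nu\in M}A_\omega^*\,a_\nu\,\tilde{\mathcal{B}}_{p^N\omega-\nu}(\lambda)\,\tilde{\pi}^{w(p^N\omega-\nu)+w(\nu)-w(\omega)}, \]
while the computation of $C_\omega^*$ above gives
\[ \langle\alpha_\lambda^*\xi^*,\xi\rangle=\sum_{\omega,\mu\in M}a_\omega\,A_\mu^*\,\tilde{\mathcal{B}}_{p^N\mu-\omega}(\lambda)\,\tilde{\pi}^{w(p^N\mu-\omega)+w(\omega)-w(\mu)}. \]
These two double sums are identified by the relabeling $(\omega,\nu)\leftrightarrow(\mu,\omega)$.

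The one step demanding care — and really the only place an obstacle could arise — is the justification of that rearrangement. In the non-archimedean setting it suffices to see that the individual summands tend to zero along the product index set $M\times M$. The term in question has norm bounded by $|a_\omega|\cdot|A_\mu^*|\cdot|\tilde{\pi}|^{(p^N-1)w(\mu)}$; since $a_\omega\to 0$ in $\omega$, $\{|A_\mu^*|\}$ is bounded, and $|\tilde{\pi}|^{(p^N-1)w(\mu)}\to 0$ as $\mu\to\infty$ (because $w(\mu)\ge 1/D$ for $\mu\ne 0$), only finitely many pairs $(\omega,\mu)$ can give a term of norm exceeding any prescribed $\epsilon$. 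Hence the $p$-adic Fubini applies and the two expressions above coincide, proving both assertions of the proposition.
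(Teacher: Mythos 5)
There is a genuine gap at the very first step: the claimed expansion
$F_{\mathrm{tot}}(X)=\prod_{i=0}^{N-1}F(\lambda^{p^i},X^{p^i})=\sum_{\rho\in M}\tilde{\pi}^{w(\rho)}\tilde{\mathcal{B}}_\rho(\lambda)X^\rho$
with $|\tilde{\mathcal{B}}_\rho(\lambda)|\le 1$ is false in general for $N\ge 2$. Lemma~2.2 controls the coefficient $B_\mu(\lambda)$ of $X^\mu$ in a single $F(\lambda,X)$, namely ${\rm ord}\:B_\mu(\lambda)\ge w(\mu)(p-1)/p^2$. But after the substitution $X\mapsto X^{p^i}$ the normalization is off by a factor $p^i$: the coefficient of $X^\rho$ in $F(\lambda^{p^i},X^{p^i})$ is $B_{\rho/p^i}(\lambda^{p^i})$ when $p^i\mid\rho$, and Lemma~2.2 only gives ${\rm ord}\ge w(\rho)(p-1)/p^{i+2}$, not $\ge w(\rho)(p-1)/p^2$. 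In other words, $F(\lambda^{p^i},X^{p^i})\notin B$ for $i\ge 1$, so the product $F_{\mathrm{tot}}$ need not lie in $B$ and ``iterating Lemma~2.2'' does not produce the asserted bound. A concrete counterexample: take $n=1$, $A=\{1,-1\}$, $N=2$, $\rho=1-p$. The decomposition $\mu_0=1$, $\mu_1=-1$ contributes $B_1(\lambda)B_{-1}(\lambda^p)=\pi^2\lambda_1\lambda_{-1}^p+\cdots$, of ${\rm ord}\approx 2/(p-1)$ for unit Teichm\"uller coordinates, whereas your claim requires ${\rm ord}\ge w(1-p)(p-1)/p^2=(p-1)^2/p^2$, and $2/(p-1)<(p-1)^2/p^2$ already for $p=5$. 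This single estimate is what drives both your boundedness claim ($|C_\omega^*|\le\sup_\mu|A_\mu^*|$) and your $p$-adic Fubini justification, so the rest of the argument does not go through as written.

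The fix is essentially what the paper does: do not multiply out the $N$ factors, but instead factor $\alpha_\lambda^*=\prod_{i}(\gamma'\circ F(\lambda^{p^i},X)\circ\Phi)$ and $\alpha_\lambda=\prod_i(\Psi\circ F(\lambda^{p^i},X))$, and verify boundedness and adjointness for a single step $\gamma'\circ F(\lambda,X)\circ\Phi$ versus $\Psi\circ F(\lambda,X)$. At a single step the uniform bound $|\tilde{B}_\mu(\lambda)|\le 1$ from Lemma~2.2 applies directly and the estimate $w(\rho)+w(\mu)-w(\nu)\ge(p-1)w(\nu)$ from subadditivity of $w$ gives everything needed; iterated composition of bounded adjoints then yields the proposition without any control on the coefficients of $F_{\mathrm{tot}}$ itself. (Alternatively, your ``direct'' route can be salvaged by estimating the coefficient $\mathcal{B}_{p^N\mu-\omega}$ in terms of both $\mu$ and $\omega$ rather than attributing a uniform $\tilde{\pi}^{w(\rho)}$ to it, using $\sum_i p^iw(\mu_i)\ge p^Nw(\mu)-w(\omega)$ over decompositions $\sum p^i\mu_i=p^N\mu-\omega$; but that is a genuinely different and more delicate bound than the one you wrote.)
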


\begin{proof}
As $\alpha_\lambda^*$ is the composition of the operators $\gamma'\circ F(\lambda^{p^i},X)\circ\Phi$ and $\alpha_\lambda$ is the composition of the operators $\Psi\circ F(\lambda^{p^i},X)$, $i=0,\dots,\epsilon d(\bar{\lambda})-1$, 
it suffices to check that $\gamma'\circ F(\lambda,X)\circ\Phi$ is an endomorphism of $B^*$ adjoint to $\Psi\circ F(\lambda,X):B\to B$.  Let $\xi^*(X)= \sum_{\mu \in M}A^{\ast}_\mu\tilde{\pi}^{-w(\mu)}X^{-\mu}\in B^*$.  The proof that the product $F(\lambda,X)\xi^*(X^p)$ is well-defined is analogous to the proof of convergence of the series~(2.3).  We have
\[ \gamma'(F(\lambda,X)\xi^*(X^p)) = \sum_{\omega\in M} C_\omega(\lambda)\tilde{\pi}^{-w(\omega)}X^{-\omega}, \]
where
\begin{equation}
C_\omega(\lambda) =  \sum_{\mu-p\nu=-\omega}\tilde{B}_\mu(\lambda)A^*_\nu
\tilde{\pi}^{w(\omega)+w(\mu)-w(\nu)}. 
\end{equation}
Note that   
$$    pw(\nu) = w(p\nu) \leq w(\omega) + w(\mu) $$
since $p\nu = \omega + \mu $. Thus
$$    (p-1)w(\nu) \leq w(\omega) + w(\mu) -w(\nu), $$
which implies that the series on the right-hand side of (4.2) converges and that $|C_\omega(\lambda)|\leq |\xi^*|$ for all $\omega\in M$.  It follows that $\gamma'(F(\lambda,X)\xi^*(X^p))\in B^*$.  It is straightforward to check that 
$\langle\Phi(X^{-\mu}),X^\nu\rangle =\langle X^{-\mu},\Psi(X^\nu)\rangle$
and that 
\[ \langle \gamma'(F(\lambda,X)X^{-\mu}),X^\nu\rangle =
\langle X^{-\mu}, F(\lambda,X)X^\nu\rangle \]
for all $\mu,\nu\in M$, which implies the maps are adjoint.
\end{proof}

By \cite[Proposition~15]{S} we have $\det(I-T\alpha^*_\lambda\mid B^*) = \det(I-T\alpha_\lambda\mid B)$, so Equation~(3.3) implies 
\begin{equation}
L(f_{\bar{\lambda}},\Theta_{\bar{\lambda}},{\bf T}^n;T)^{(-1)^{n+1}} = \det(I-T\alpha^*_\lambda\mid B^*)^{\delta^n_{\bar{\lambda}}}. 
\end{equation}
From Equations~(2.14) and~(2.16), we have
\[ \alpha^*\bigg(\frac{\zeta(\Lambda,X)}{G_0(\Lambda)}\bigg) = {\mathcal F}(\Lambda)\frac{\zeta(\Lambda,X)}{G_0(\Lambda)}. \]
It follows by iteration that for $m\geq 0$,
\begin{equation}
(\alpha^*)^m\bigg(\frac{\zeta(\Lambda,X)}{G_0(\Lambda)}\bigg) = \bigg(\prod_{i=0}^{m-1}{\mathcal F}(\Lambda^{p^i})\bigg)\frac{\zeta(\Lambda,X)}{G_0(\Lambda)}. 
\end{equation}
We have
\[ \frac{\zeta(\Lambda,X)}{G_0(\Lambda)} = \sum_{\mu\in M_0} \frac{G_\mu(\Lambda)}{G_0(\Lambda)}\tilde{\pi}^{-w(\mu)}X^{-\mu}, \]
so by Corollary~2.17 we may evaluate at $\Lambda = \lambda$ to get an element of $B^*$:
\[ \frac{\zeta(\Lambda,X)}{G_0(\Lambda)}\bigg|_{\Lambda = \lambda} = \sum_{\mu\in M_0} \frac{G_\mu(\Lambda)}{G_0(\Lambda)}\bigg|_{\Lambda = \lambda}\tilde{\pi}^{-w(\mu)}X^{-\mu}\in B^*. \]
It is straightforward to check that the specialization of the left-hand side of Equation~(4.4) with $m = \epsilon d(\bar{\lambda})$ at $\Lambda = \lambda$ is exactly
$\alpha^*_\lambda((\zeta(\Lambda,X)/G_0(\Lambda))|_{\Lambda = \lambda})$,
so specializing Equation~(4.4) with $m=\epsilon d(\bar{\lambda})$ at $\Lambda = \lambda$ gives
\begin{equation}
\alpha^*_\lambda\bigg( \frac{\zeta(\Lambda,X)}{G_0(\Lambda)}\bigg|_{\Lambda = \lambda}\bigg) = \bigg(\prod_{i=0}^{\epsilon d(\bar{\lambda})-1}{\mathcal F}(\lambda^{p^i})\bigg)\frac{\zeta(\Lambda,X)}{G_0(\Lambda)}\bigg|_{\Lambda = \lambda}. 
\end{equation}
Equation~(4.5) shows that $\prod_{i=0}^{\epsilon
  d(\bar{\lambda})-1}{\mathcal F}(\lambda^{p^i})$ is a (unit)
eigenvalue of $\alpha^*_\lambda$, hence by Equation~(4.3) it is the
unique unit eigenvalue of $L(f_{\bar{\lambda}};T)$.

\end{document}